\DeclareMathOperator{\IRR}{Irr}
\newcommand{\vF}{\mathbb{F}}
\newcommand{\vN}{\mathbb{N}}
\newcommand{\cS}{\mathcal{S}}
\newtheorem{theorem}{Theorem}[section]
\newtheorem{lemma}[theorem]{Lemma}
\newtheorem{proposition}[theorem]{Proposition}
\newtheorem{corollary}[theorem]{Corollary}
\theoremstyle{definition}
\newtheorem{definition}[theorem]{Definition}
\theoremstyle{remark}
\newtheorem{remark}[theorem]{Remark}
\newtheorem{example}[theorem]{Example}
\numberwithin{equation}{section}
\begin{document}

\title[On sets of irreducible polynomials closed by composition]
{On sets of irreducible polynomials closed by composition}

\author[A. Ferraguti]{Andrea Ferraguti}
\address{Institute of Mathematics\\
University of Zurich\\
Winterthurerstrasse 190\\
8057 Zurich, Switzerland\\
}
\email{andrea.ferraguti@math.uzh.ch}

\author[G. Micheli]{Giacomo Micheli}
\address{Mathematical Institute\\
University of Oxford\\
Woodstock Rd,\\ 
Oxford OX2 6GG, United Kingdom
}
\email{giacomo.micheli@maths.ox.ac.uk}

\author[R. Schnyder]{Reto Schnyder}
\address{Institute of Mathematics\\
University of Zurich\\
Winterthurerstrasse 190\\
8057 Zurich, Switzerland\\
}
\email{reto.schnyder@math.uzh.ch}

   \thanks{The Second Author is thankful to SNSF grant number 161757.}
\subjclass[2010]{11T06}

\keywords{Finite Fields; Irreducible Polynomials; Semigroups; Graphs.}

\begin{abstract}
Let $\mathcal S$ be a set of monic degree $2$ polynomials over a
finite field and let $C$ be the compositional semigroup generated by $\mathcal
S$. In this paper we establish a necessary and sufficient condition for $C$ to
be consisting entirely of irreducible polynomials. The condition we
deduce depends on the finite data encoded in a certain graph uniquely
determined by the generating set $\cS$. Using this machinery we are able both
to show examples of semigroups of irreducible polynomials generated by two
degree $2$ polynomials and to give some non-existence results for some of these
sets in infinitely many prime fields satisfying certain arithmetic conditions.
\end{abstract}

\maketitle

\section{Introduction}
\label{sec:introduction}
Since irreducible polynomials play a fundamental role in applications and in the
whole theory of finite fields (see for
example~\cite{lidl1997finite,mullen2013handbook,rabin1981fingerprinting,barbulescu2014heuristic,ostafe2010length,ahmadi2012stable}),
related questions have a long history (see for example
\cite{gao1999irreducible,gao1997tests,shoup1990new,jones2012iterative,jones2012settled,andrade2013special,von2003irreducible}).
In this paper we specialize on irreducibility questions regarding
compositional semigroups of polynomials. This kind of question has been
addressed in the specific case of semigroups generated by a single quadratic
polynomial, see for example in
\cite{ahmadi2012stable,ostafe2010length,jones2012settled,jones2012iterative},
for analogous results related to additive polynomials, see
\cite{batra1994algebraic,batra1994}. It is worth mentioning that one of these
results \cite[Lemma 2.5]{jones2012settled} has been
recently used in \cite{ferraguti2016existence} by the first and the second
author of the present paper to prove \cite[Conjecture 1.2]{andrade2013special}. 

Throughout the paper, $q$ will be an odd prime power, $\vF_q[x]$ the univariate
polynomial ring over the finite field $\vF_q$ and $\IRR(\vF_q[x])$ the set of
irreducible polynomials in $\vF_q[x]$.
Let us give an example which motivates this paper.
For a prime $p$ congruent to $1$ modulo $4$, we can fix in $\vF_{p}[x]$ two
quadratic polynomials $f = (x-a)^2 + a$ and $g = (x-a-1)^2 + a$ such
that both $a$
and $ a + 1$ are non-squares in $\vF_p$. One can experimentally check that any
possible composition of a sequence of $f$'s and $g$'s is irreducible (for a
concrete example, take $q=13$, $(x-5)^2 + 5$ and $g = (x-6)^2 + 5$).
Let us denote the set of such compositions by $C$.
A couple of observations are now necessary:
\begin{itemize} 
\item In principle, it is unclear whether a finite number of irreducibility checks will ensure that $C$ is a subset of $\IRR(\vF_q[x])$.
\item The fact that $C\subseteq \IRR(\vF_{q}[x])$ is indeed pretty unlikely to happen by chance, as the density of degree $2^n$ monic irreducible polynomials over $\vF_q$ is roughly $1/2^n$. Thus, if $C$ satisfies this property, one reasonably expects that  there must be an algebraic reason for that.
\end{itemize}
We address these issues by giving a necessary and sufficient condition for the
semigroup $C\subset \vF_q[x]$ to be contained in $\IRR(\vF_q[x])$. In addition,
this condition is algebraic and can be checked by performing only a finite
amount of computation over $\vF_q$, answering both points above.

In Section \ref{sec:gen_crit} we describe the criterion (Theorem \ref{thm:main} and Corollary \ref{main_corollary}) and provide a non-trivial example (Example \ref{ex:irred_semigroup}) of a compositional semigroup in $\vF_q[x]$ contained in $\IRR(\vF_q[x])$ and generated by two polynomials.

In Section \ref{sec:p3mod4} we show the non-existence of such $C$ whenever $q$
is a prime congruent to $3$ modulo $4$ and the generating polynomials are of a
certain form (Proposition \ref{prop:p3}). Example \ref{ex:sharp_cond} shows
that these conditions are indeed sharp.

\section{A general criterion}\label{sec:gen_crit}

In order to state our main result, we first need the following definition, which describes how to build a finite graph encoding only the useful (to our purposes) information contained in the generating set of the semigroup.

\begin{definition}\label{graph_definition}
Let $q$ be an odd prime power, $\vF_q$ the finite field of order $q$ and $\cS$ a subset of $\vF_q[x]$.
We denote by $G_\cS$ the directed multigraph defined as follows:
\begin{itemize}
\item the set of nodes of $G_\cS$ is $\vF_q$;
\item for any node $a \in \vF_q$ and any polynomial $f \in \cS$, there is a
	directed edge $a\rightarrow f(a)$. We label that edge with $f$.
\end{itemize}
\end{definition}

Before stating the next definition, we recall that for any monic polynomial $f$ 
of degree $2$ there exist unique pair $(a_f,b_f)\in \vF_{q}^2$ such that
$f=(x-a_f)^2-b_f$.

\begin{definition}
Let $\cS$ be a subset of $\vF_q[x]$ consisting of monic polynomials of degree $2$.
We call the set $D_\cS:=\{-b_f\,|\,f\in \cS \}\subseteq \vF_q$,  the $\cS$\emph{-distinguished set} of $\vF_q$.

\end{definition}
The following result is just an inductive extension of the classical Capelli's Lemma.
\begin{lemma}[Recursive Capelli's Lemma]
Let $K$ be a field and $f_1,\dots,f_l$ be a set of irreducible polynomials in
$K[X]$. The polynomial $f_1(f_2(\cdots (f_{l})\cdots))$ is irreducible if and only if the following conditions are satisfied
\[\begin{cases}
f_1 \mbox{is irreducible over $K[X]$ }\\ 
f_2-\alpha_1 \quad \mbox{is irreducible over $K(\alpha_1)[X]$ for a root $\alpha_1$ of $f_1$}\\
f_3-\alpha_2 \quad \mbox{is irreducible over $K(\alpha_1,\alpha_2)[X]$ for a root $\alpha_2$ of $f_2- \alpha_1$}\\
\cdots\\
f_{l}-\alpha_{l-1} \quad \mbox{is irreducible over $K(\alpha_1,\dots, \alpha_{l-1})[X]$ for a root $\alpha_{l-1}$ of $f_{l-1} - \alpha_{l-2}$}
\end{cases}
\]
\end{lemma}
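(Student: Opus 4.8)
The plan is to induct on $l$, at each step stripping off the outermost factor $f_1$ and invoking the classical Capelli Lemma. The base case $l=1$ is immediate, since then the composition is $f_1$ itself and the single listed condition is exactly that $f_1$ be irreducible over $K$. For the inductive step, write the composition as $F := f_1(g)$, where $g := f_2(f_3(\cdots(f_l)\cdots))$ is the composition of the remaining $l-1$ polynomials. If $f_1$ is reducible then $F$ is reducible and the first condition fails, so both sides of the asserted equivalence are false; otherwise $f_1$ is irreducible, and classical Capelli applied to $f_1$ with a root $\alpha_1$ gives that $F$ is irreducible over $K$ if and only if $g-\alpha_1$ is irreducible over $K(\alpha_1)$.

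The step that makes the recursion close up is the elementary identity
\[
g-\alpha_1 \;=\; f_2(f_3(\cdots(f_l)\cdots))-\alpha_1 \;=\; (f_2-\alpha_1)\bigl(f_3(\cdots(f_l)\cdots)\bigr),
\]
which exhibits the shifted composition $g-\alpha_1$ as the composition of the $l-1$ polynomials $f_2-\alpha_1,\,f_3,\,\dots,\,f_l$ over the field $K(\alpha_1)$. Applying the induction hypothesis over $K(\alpha_1)$ to this shorter composition produces precisely the remaining conditions of the statement: first that $f_2-\alpha_1$ be irreducible over $K(\alpha_1)$, then that $f_3-\alpha_2$ be irreducible over $K(\alpha_1,\alpha_2)$ for a root $\alpha_2$ of $f_2-\alpha_1$, and so on down to $f_l-\alpha_{l-1}$. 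Conjoined with the condition that $f_1$ be irreducible, coming from the outer Capelli step, these are exactly the conditions in the system, which yields the biconditional.

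The one point requiring care is the bookkeeping of ground fields: the induction hypothesis is not applied over $K$ but over the successive extension $K(\alpha_1)$, so the lemma must be stated—as it is—for an arbitrary base field, and the tower $K\subseteq K(\alpha_1)\subseteq K(\alpha_1,\alpha_2)\subseteq\cdots$ must be the one generated by the chosen roots. Note that the irreducibility needed to invoke Capelli at stage $i$ is that of the \emph{shifted} polynomial $f_i-\alpha_{i-1}$, which is exactly the $i$-th condition being tested; in particular the hypothesis that the individual $f_i$ are irreducible is inessential, the conditions themselves supplying all that is required, and the named roots are determined only up to conjugacy over the relevant field, which does not affect irreducibility. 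As a sanity check one may bypass the induction entirely and argue directly that the listed conditions are equivalent to the multiplicativity chain $[K(\alpha_l):K]=\prod_{i=1}^{l}\deg f_i=\deg F$ in the tower, forcing the minimal polynomial of the root $\alpha_l$ of $F$ to equal $F$; but the inductive Capelli argument is shorter and matches the description of the result as an inductive extension of Capelli's Lemma.
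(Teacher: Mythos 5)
Your proof is correct and follows exactly the route the paper intends: the paper's own proof is just the one-line remark that the result follows from classical Capelli's Lemma ``by induction,'' and your argument is precisely that induction carried out in detail, with the key identity $f_2(f_3(\cdots(f_l)\cdots))-\alpha_1=(f_2-\alpha_1)(f_3(\cdots(f_l)\cdots))$ making the recursion close up over the shifted base field $K(\alpha_1)$. Your side remarks (that the standing irreducibility hypothesis on the $f_i$ is inessential, and that the choice of root only matters up to conjugacy) are accurate and tighten a point the paper leaves implicit.
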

\begin{proof}
Given Capelli's Lemma \cite[Lemma 2.4]{jones2012settled}, the proof is straightforward by induction.
\end{proof}
We are now ready to state and prove the main theorem.
\begin{theorem}\label{thm:main}
Let $\cS$ be a set of generators for a compositional semigroup $C\subseteq \vF_q[x]$.
Suppose that $\cS$ consists of polynomials of degree $2$.
Then we have that $C\subseteq \IRR(\vF_q[x])$ if and only if no element of
$-D_{\cS}=\{b_f\,|\,f\in \cS \}\subseteq \vF_q$ is a square and
 in $G_\cS$  there is no path of positive length from a node of $D_\cS$ to a square of $\vF_q$.
\end{theorem}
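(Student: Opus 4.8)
\section*{Proof proposal}

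The plan is to run the Recursive Capelli's Lemma on an arbitrary element of $C$ and to translate each resulting irreducibility condition, which a priori lives in a tower of quadratic extensions, into a single square/non-square test over $\vF_q$ that the graph $G_\cS$ is precisely designed to record. Fix a composition $F=f_1\circ\cdots\circ f_l$ with $f_i=(x-a_i)^2-b_i$, and set $K_0=\vF_q$ and $K_i=K_{i-1}(\alpha_i)$, where $\alpha_i$ is a root of $f_i-\alpha_{i-1}$ (with the convention $\alpha_0=0$). Since a monic quadratic $(x-c)^2-d$ is irreducible over a field exactly when $d$ is a non-square, the Recursive Capelli's Lemma says that $F$ is irreducible if and only if $b_1$ is a non-square in $\vF_q$ and, for every $2\le j\le l$, the element $c_j:=b_j+\alpha_{j-1}$ is a non-square in $K_{j-1}$. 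First I would note that as long as these conditions hold up to step $j-1$, the tower is genuinely quadratic at each stage, so $K_{j-1}=\vF_{q^{2^{j-1}}}$ and $\alpha_{j-1}$ has minimal polynomial $f_1\circ\cdots\circ f_{j-1}$ over $\vF_q$.

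The key arithmetic input is a finite-field lemma: for a quadratic extension $L/M$ of finite fields of odd order, an element $\gamma\in L^\ast$ is a square in $L$ if and only if its norm $N_{L/M}(\gamma)$ is a square in $M$. This follows from a direct computation in the cyclic groups $L^\ast$ and $M^\ast$, using that $M^\ast$ is generated by the $(\abs{M}+1)$-th power of a generator of $L^\ast$ and that $\abs{M}-1$ is even. Iterating this equivalence up the tower and invoking transitivity of the norm, I obtain that $c_j$ is a square in $K_{j-1}$ if and only if $N_{K_{j-1}/\vF_q}(c_j)$ is a square in $\vF_q$, reducing each condition to a single test over the base field.

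Next I would compute this norm explicitly. Writing $Q_{j-1}:=f_1\circ\cdots\circ f_{j-1}$, which is the monic degree-$2^{j-1}$ minimal polynomial of $\alpha_{j-1}$, the norm equals the product of $b_j+\rho$ over the roots $\rho$ of $Q_{j-1}$; since $2^{j-1}$ is even this is $Q_{j-1}(-b_j)=(f_1\circ\cdots\circ f_{j-1})(-b_j)$. This is precisely the value obtained by starting at the node $-b_j=-b_{f_j}\in D_\cS$ and following in $G_\cS$ the edges labelled $f_{j-1},f_{j-2},\dots,f_1$, i.e.\ the endpoint of a path of positive length $j-1$ beginning in $D_\cS$. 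Thus the condition at step $j$ becomes: the endpoint of this path is a non-square; while the step-$1$ condition is exactly that $b_{f_1}\in -D_\cS$ is a non-square.

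Both implications then follow. For sufficiency, assuming no $b_f$ is a square and no positive-length path from $D_\cS$ lands on a square, an induction on $j$ verifies every Recursive Capelli condition for each $F\in C$, so $C\subseteq\IRR(\vF_q[x])$. For necessity, each $f\in\cS$ already lies in $C$, forcing every $b_f$ to be a non-square; and if some positive-length path from a node $-b_f\in D_\cS$ with labels $g_1,\dots,g_m$ reached a square $s$, then the composition $g_m\circ\cdots\circ g_1\circ f\in C$ would fail its top Recursive Capelli condition, its relevant value being exactly $s$, hence be reducible, a contradiction. The main obstacle I anticipate is establishing the norm--square descent cleanly, namely both the finite-field equivalence of the second paragraph and the identification of the iterated norm with the single evaluation $(f_1\circ\cdots\circ f_{j-1})(-b_j)$, together with keeping the inductive dependence straight, since the minimal-polynomial identification at step $j$ is only legitimate once the earlier conditions are known to hold.
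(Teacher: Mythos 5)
Your proposal is correct, and it rests on the same two pillars as the paper's proof: the Recursive Capelli's Lemma, and the descent of square-testing down the quadratic tower via the norm (an element of a finite field is a square if and only if its norm is). What you do genuinely differently is (a) how the key identity is established and (b) the logical organization. The paper proves that the norm of the top element equals $f_1(\cdots f_l(-b_{l+1})\cdots)$ by factoring the norm as $N_2^{1}\circ N_3^{2}\circ\cdots\circ N_l^{l-1}$ and computing each step with explicit square roots; that computation is only valid when every intermediate $\beta_i$ is a non-square (so that conjugation sends $\sqrt{\beta_i}\mapsto -\sqrt{\beta_i}$), which is why the paper organizes both directions around minimal counterexamples (a reducible composition of \emph{minimal degree}, a path of \emph{minimal length}). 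You instead obtain the identity in one stroke as the product of $b_j+\rho$ over the Galois conjugates $\rho$ of $\alpha_{j-1}$, i.e.\ as $Q_{j-1}(-b_j)$ with $Q_{j-1}=f_1\circ\cdots\circ f_{j-1}$ the minimal polynomial, the even degree $2^{j-1}$ absorbing the sign; the hypothesis this needs --- that $Q_{j-1}$ really is the minimal polynomial of $\alpha_{j-1}$ --- is exactly what the earlier Capelli conditions guarantee, and you correctly flag and handle this dependence. As a consequence you can translate \emph{every} Capelli condition uniformly by induction up the tower: sufficiency is the induction itself, and necessity uses the assumed irreducibility of the full composition to legitimize the tower before reading off the contradiction at the top condition, so neither direction needs a minimality argument. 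Your route buys a cleaner and more rigorous treatment of precisely the step the paper dispatches as ``directly computing,'' at the cost of slightly more bookkeeping in setting up the tower notation; the paper's minimal-counterexample framing is shorter to state but hides the same inductive dependence inside the minimality assumptions.
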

\begin{proof}
It is clear that $C$ contains a reducible polynomial of degree $2$ if and only if one element of $-D_\cS$ is a square.
Thus we can assume that $\cS$ consists only of irreducible polynomials.

We now show that in $G_\cS$ there is a path of positive length from a node of $D_{\cS}$ to a square if and only if $C$ contains a reducible polynomial of degree greater or equal than $4$.

First, suppose that the composition $f_1 f_2\cdots f_{l+1}$ is a reducible polynomial of minimal degree, with $f_i\in \cS$ and $f_i=(x-a_i)^2-b_i$, for $i\in\{1,\dots, l+1\}$ and $l\geq 1$.
Whenever $\beta$ is not a square in $\vF_q$, we denote by $\sqrt{\beta}$ a root of the polynomial $T^2-\beta$ in the algebraic closure of $\vF_q$.
By Capelli's Lemma applied to the composition of $f_1\cdots f_l$ and by the minimality of the degree of $f_1 f_2\cdots f_{l+1}$, we have that the following elements are not squares in their field of definition:
\[\beta_0:=b_1\in \vF_q,\]
\[\beta_1:=b_2+a_1+\sqrt{\beta_0}\in \vF_{q^2}\]
\[\beta_2=b_3+a_2+\sqrt{\beta_1}\in \vF_{q^{2^2}}\]
\[\dots\]
\[\beta_{l-1}:=b_l+a_{l-1}+\sqrt{\beta_{l-2}}\in \vF_{q^{2^{l-1}}}.\]

On the other hand, $\beta_l=b_{l+1}+a_{l}+\sqrt{\beta_{l-1}} \in \vF_{q^{2^{l}}}$ is necessarily a square.  
For $j<i$, let us denote by $N_i^j:\vF_{q^{2^i}} \longrightarrow \vF_{q^{2^j}}$ the usual norm map.
We claim that the $\vF_q$-norm $N_l^0:\vF_{q^{2^l}}\longrightarrow \vF_q$ maps $\beta_l$ to $f_1 (\cdots f_{l}(-b_{l+1})\cdots)$, and this defines a path in $G_\cS$ from $-b_l$ to a square. This can be easily seen by first decomposing $N_l^1$:
\[N_l^1=N_2^{1} \circ N_{3}^{2} \circ \dots N_l^{l-1}\]
and then by directly computing $N_2^{1} \circ N_{3}^{2} \circ \dots N_l^{l-1}(\beta_l)$.
It is important indeed  that $\beta_0,\beta_1,\dots, \beta_{l-1}$ are not squares, as the computation above only gives the desired result when $(\sqrt{\beta_i}) ^{q^{2^i}}=-\sqrt{\beta_i}$.

Conversely, suppose that in $G_\cS$ there is a path to a square $s$. Choose such a path of minimal length, starting at some $-b_f$ in the distinguished set, for some $f\in \cS$. Consider now the composition associated to this path: 
if \[s=f_1 f_2 \cdots f_l(-b_f),\] set $f_{l+1} = f$ and let $g\coloneqq f_1 f_2 \cdots f_{l+1}\in \vF_q[x]$.
One can construct the $\beta_i$'s as before, i.e.\@ $\beta_0 = b_1$ and for $i\in \{1,\dots, l\}$, $\beta_i= b_{i+1} + a_i+\sqrt{\beta_{i-1}}$. We can suppose that the $\beta_i$'s for $i < l$ are all non-squares as otherwise, by taking the smallest $d$ such that $\beta_d$ is square, we find a composition $f_1 f_2 \cdots f_{d+1}$ that is reducible by Recursive Capelli's Lemma, and then we are done.

As all the $\beta_i$'s, for $i<l$, can be supposed to be non-squares, we have as above that $N_l^0(\beta_l)=f_1 f_2 \cdots f_l(-b_{l+1})=s$, which we have assumed to be a square. Now, recall that an element  of a finite field is a square if and only if its norm is a square: this  shows that $g$ is reducible by Recursive Capelli's Lemma.
\end{proof}

The reader should observe that this theorem generalizes \cite[Proposition 2.3]{jones2012settled}.
It is useful to mention the following corollary, which is immediate.
\begin{corollary}\label{main_corollary}
Let $\cS$ be a set of irreducible degree two polynomials and $C$ defined as in Theorem \ref{thm:main}. Then $C\subseteq \IRR(\vF_q[x])$ if and only if there is no path of positive length from a node of $D_\cS$ to a square of $\vF_q$.
\end{corollary}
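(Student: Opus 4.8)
The plan is to show that the extra hypothesis of the corollary—that every generator of $C$ is irreducible—makes the first of the two conditions in Theorem~\ref{thm:main} hold automatically, so that only the second condition remains. This is why the result is immediate: the corollary is simply Theorem~\ref{thm:main} with one of its two clauses absorbed into the standing assumption.

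First I would recall the elementary characterization of irreducibility for a monic quadratic. Writing $f = (x - a_f)^2 - b_f$ as in the convention fixed before the definition of $D_\cS$, the roots of $f$ are $a_f \pm \sqrt{b_f}$, so $f$ splits over $\vF_q$ precisely when $b_f$ is a square in $\vF_q$. Equivalently, $f$ is irreducible over $\vF_q$ if and only if $b_f$ is a non-square. Next I would translate this into the language of the distinguished set: since $-D_\cS = \{\, b_f : f \in \cS \,\}$, the assertion ``no element of $-D_\cS$ is a square'' is word-for-word the assertion ``$b_f$ is a non-square for every $f \in \cS$'', which by the previous step is exactly the assertion that every $f \in \cS$ is irreducible. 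Hence the first condition of Theorem~\ref{thm:main} coincides with the hypothesis of the corollary.

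Finally I would conclude. Under the corollary's hypothesis the first condition of Theorem~\ref{thm:main} is met, so the biconditional of that theorem collapses to the single equivalence: $C \subseteq \IRR(\vF_q[x])$ if and only if there is no path of positive length in $G_\cS$ from a node of $D_\cS$ to a square of $\vF_q$. There is no genuine obstacle in this argument; the only point demanding care is the bookkeeping of signs, namely keeping the convention $D_\cS = \{-b_f\}$ versus $-D_\cS = \{b_f\}$ straight and noting that the quadratic irreducibility criterion is naturally phrased in terms of $b_f$ rather than $-b_f$.
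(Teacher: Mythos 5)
Your proof is correct and is essentially the paper's own argument: the paper's one-line proof likewise observes that $\cS\subseteq \IRR(\vF_q[x])$ forces $-D_\cS$ to consist of non-squares, so the first clause of Theorem~\ref{thm:main} holds automatically and only the path condition remains. Your version just spells out the quadratic irreducibility criterion ($f$ irreducible iff $b_f$ is a non-square) that the paper leaves implicit.
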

\begin{proof}
It is enough to observe that whenever $\cS\subseteq \IRR(\vF_q[x])$ then $-D_\cS$ consists of non-squares.
\end{proof}

\begin{remark}
Given that $C$ is generated by degree $2$ polynomials, it is easy to observe that the datum of $\cS$ is equivalent to the datum of $C$.
\end{remark}
The following example shows a way to find examples of semigroups contained in $\IRR(\vF_q[x])$ when $q\equiv 1 \mod 4$.
\begin{example}\label{ex:irred_semigroup}
Let $q \equiv 1 \mod 4$ be a prime power, and let $a \in \vF_q$ such that both $a$
and $b = a + 1$ are non-squares.
Define $f = (x-a)^2 + a$ and $g = (x-b)^2 + a$.
In this situation, we have $D_\cS = \{ a \}$, and by assumption, $-a$,
$a$ and $b$ are all non-squares.
Since $f(a) = g(b) = a$ and $f(b) = g(a) = b$, all paths in $G_\cS$ starting
from $a$ end in a non-square, and the conditions of Theorem~\ref{thm:main} are
satisfied.
Figure~\ref{fig:irred_semigroup} shows the relevant part of the graph $G_\cS$.
The reader should observe that this is indeed the example mentioned in the introduction.

\begin{figure}[h]
	\centering
	\begin{tikzpicture}
		\node (a) at (-1,0) [draw, circle, double] {$a$};
		\node (b) at ( 1,0) [draw, circle]         {$b$};

		\draw [->] (a) to [out=45,in=135]  node[below] {$g$} (b);
		\draw [->] (b) to [out=225,in=315] node[above] {$g$} (a);
		\draw [->] (a) to [loop left]      node[left]  {$f$} (a);
		\draw [->] (b) to [loop right]     node[right] {$f$} (b);
	\end{tikzpicture}
	\caption{The nodes of $G_\cS$ reachable from $D_\cS$.}
	\label{fig:irred_semigroup}
\end{figure}
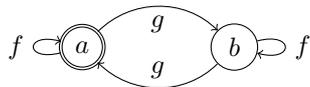
\end{example}

\section{The case \texorpdfstring{$p\equiv 3 \mod 4$}{}}\label{sec:p3mod4}
Whenever $q=p$ is a prime congruent to $3$ modulo $4$, we have the following non-existence results.
\begin{lemma}
 Let $p\equiv -1 \bmod 8$ be a prime, and let $f=x^2-b$ be a polynomial in $\vF_p[x]$. Let $C$ be the semigroup generated by $f$. Then $C$ contains a reducible polynomial.
\end{lemma}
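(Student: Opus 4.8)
The plan is to apply Theorem~\ref{thm:main} to the singleton generating set $\cS=\{f\}$, where $f=x^2-b$ has $a_f=0$ and $b_f=b$, so that $D_\cS=\{-b\}$ and $G_\cS$ is simply the functional graph of the map $\varphi\colon x\mapsto x^2-b$ on $\vF_p$. By the theorem, $C$ contains a reducible polynomial precisely when $b$ is a square or there is a positive-length path in $G_\cS$ from $-b$ to a square, i.e.\@ when some iterate $u_l:=\varphi^{(l)}(-b)$ with $l\ge 1$ is a square. If $b$ is itself a square then $f$ is already reducible and we are done, so I would assume $b$ is a non-square. Since $p\equiv -1\bmod 8$ forces $p\equiv 3\bmod 4$, the element $-1$ is a non-square, hence $-b=(-1)\cdot b$ is a square; this is the key asymmetry I intend to exploit. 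Arguing by contradiction, I suppose instead that every $u_l$ with $l\ge 1$ is a non-square, and let $\chi$ denote the quadratic character of $\vF_p$.

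Next I would use finiteness of $\vF_p$: the forward orbit $-b=u_0,u_1,u_2,\dots$ is eventually periodic, so it terminates in a cycle $Z$ all of whose elements are non-squares (they occur among the $u_l$ with $l\ge 1$). Let $m\ge 1$ be minimal with $u_m\in Z$ and let $z'\in Z$ be the predecessor of $u_m$ along the cycle. Both $u_{m-1}$ and $z'$ are $\varphi$-preimages of $u_m$, so they are the two roots $\pm\sqrt{u_m+b}$ of $T^2=u_m+b=u_{m-1}^2$; since $u_{m-1}\notin Z$ by minimality we cannot have $z'=u_{m-1}$, whence $z'=-u_{m-1}$. As $\chi(-1)=-1$ and $z'$ is a non-square, $u_{m-1}=-z'$ is a square. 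A non-square orbit forbids this for $m\ge 2$, so $m=1$; then $u_0=-b$ is the square preimage $-z'$ of $u_1$, which gives $z'=-u_0=b$. In particular $b\in Z$.

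The final step, which I expect to be the decisive one, is to rule out $b\in Z$. Being a node of a cycle, $b$ must have a $\varphi$-preimage in $\vF_p$, i.e.\@ $x^2-b=b$ must be solvable, which requires $2b$ to be a square. But $\chi(2b)=\chi(2)\chi(b)=(+1)\cdot(-1)=-1$, because $p\equiv -1\bmod 8$ makes $2$ a square while $b$ is a non-square; thus $2b$ is a non-square and $b$ has no preimage at all, a contradiction. Hence some $u_l$ with $l\ge 1$ is a square and $C$ contains a reducible polynomial. The heart of the argument is the identification $b\in Z$ via the entry-point analysis of the second paragraph, together with the observation that the hypothesis $p\equiv -1\bmod 8$ enters exactly through $\chi(2)=+1$; indeed for $p\equiv 3\bmod 8$ one has $\chi(2)=-1$, $b=2$ becomes a non-square fixed point of $\varphi$, and stable examples such as $x^2-2$ over $\vF_{11}$ appear, showing that the congruence cannot be relaxed.
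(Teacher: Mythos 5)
Your proposal is correct and follows essentially the same route as the paper's proof: both reduce via the main criterion to showing some iterate of $-b$ must be a square, then analyze the entry point of the eventually periodic orbit, using $\chi(-1)=-1$ (from the $\pm$ pairing of preimages) to force the orbit's tail to have length one, and finally $\chi(2)=+1$ to rule out $b$ being periodic via $w^2=2b$. The paper phrases this with a minimal repetition index $k$ rather than your cycle $Z$ and predecessor $z'$, but the mathematical content is identical; your closing remark on sharpness for $p\equiv 3\bmod 8$ (e.g.\ $x^2-2$ over $\vF_{11}$) is a nice addition not present in the paper's proof.
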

\begin{proof}
 Assume for contradiction that $C \subset \IRR(\vF_p[x])$.
 First note that if $b$ is a square, then $f$ is reducible, so we can assume
 that $b$ is not a square, and thus $-b$ is a square. Consider the set of
 iterates $T=\{f(-b),f^2(-b),\ldots\}\subseteq \vF_p$. By Corollary
 \ref{main_corollary}, $C$ contains only irreducible polynomials if and only if
 $T$ contains only nonsquares. So assume that this condition holds. Since $T$ is
 finite, there exist $k < m\in \vN_{>0}$ such that $f^m(-b)=f^k(-b)$. Choose $k$
 to be minimal.
 Now there are two cases: if $k > 1$, then there exist two distinct
 elements $u,v\in T$ such that $u^2-b=v^2-b$. Thus, $u=-v$, which implies that
 one between $u$ and $v$ is a square, a contradiction.
 If on the other hand $k = 1$, then we have $f^m(-b) = f(-b) = b^2 - b$, and so
 $f^{m-1}(-b)$ is either $-b$ or $b$. It can't be $-b$, since that is a square,
 so we must have $f^{m-1}(-b) = b \in T$. Setting $u = f^{m-2}(-b)$, we get that
 $u^2 - b = b$ and so $u^2 = 2b$, which is a contradiction because $2$ is a
 square in $\vF_p$ and consequently $2b$ is not.
\end{proof}

\begin{proposition}\label{prop:p3}
 Let $p\equiv 3 \bmod 4$ be a prime. Let $f=x^2-b_f$ and $g=x^2-b_g$ be polynomials in $\vF_p[x]$ with $b_f,b_g$ distinct non-squares. Let $\mathcal S=\{f,g\}$ and let $C$ be the semigroup generated by $\mathcal S$. Then $C$ contains a reducible polynomial.
\end{proposition}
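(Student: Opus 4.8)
The plan is to argue by contradiction and reduce everything to the graph criterion of Corollary~\ref{main_corollary}. First I would note that since $b_f$ and $b_g$ are non-squares, $f$ and $g$ are irreducible and $D_\cS=\{-b_f,-b_g\}$, so Corollary~\ref{main_corollary} applies: assuming $C\subseteq\IRR(\vF_p[x])$ is equivalent to assuming that no path of positive length in $G_\cS$ starting in $D_\cS$ reaches a square. Let $T\subseteq\vF_p$ be the set of all nodes reachable from $D_\cS$ by a path of \emph{positive} length. Under the contradiction hypothesis, $T$ consists entirely of non-squares; in particular $0\notin T$ and $T\neq\vF_p$. Furthermore $T$ is nonempty (there is a length-one edge out of $-b_f$) and, viewing $f$ and $g$ as the self-maps $a\mapsto a^2-b_f$ and $a\mapsto a^2-b_g$ of $\vF_p$, it is forward-closed: $f(T)\subseteq T$ and $g(T)\subseteq T$.

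The key step, and the place where the hypothesis $p\equiv 3\bmod 4$ is essential, is the following. Since $p\equiv 3\bmod 4$, the element $-1$ is a non-square, so $a$ and $-a$ always have opposite quadratic character. As every element of $T$ is a non-square, $T$ cannot contain both $a$ and $-a$ for any $a\neq 0$; and because $f(a)=f(-a)$ depends only on $a^2$, this forces $f|_T$ to be injective, and likewise $g|_T$. An injective self-map of a finite set is a bijection, so $f|_T$ and $g|_T$ are in fact permutations of $T$.

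To finish, set $Q:=\{a^2 \,:\, a\in T\}$. Surjectivity of $f|_T$ yields $Q-b_f=\{a^2-b_f \,:\, a\in T\}=T$, and surjectivity of $g|_T$ yields $Q-b_g=T$. Hence $Q-b_f=Q-b_g$, i.e.\ $Q$ is invariant under translation by $c:=b_g-b_f\neq 0$. Since $p$ is prime, $c$ generates the additive group $(\vF_p,+)$, so $Q$ would be invariant under every translation and thus equal to all of $\vF_p$, contradicting the fact that $Q$ consists of nonzero squares. This contradiction proves that $C$ must contain a reducible polynomial.

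I expect the main obstacle to be establishing the bijectivity of $f|_T$ and $g|_T$, since the rest is formal. This is exactly where $p\equiv 3\bmod 4$ enters, and it fails for $p\equiv 1\bmod 4$, where $-1$ is a square and $a,-a$ may both be non-squares, breaking injectivity; this is consistent with the irreducible semigroups produced in Example~\ref{ex:irred_semigroup}. Once the permutation property is secured, the translation-invariance argument is immediate.
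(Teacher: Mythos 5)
Your proof is correct and follows essentially the same route as the paper's: both reduce to Corollary~\ref{main_corollary}, pass to the set of nodes reachable from $D_\cS$ by positive-length paths, and use the fact that $-1$ is a non-square for $p\equiv 3 \bmod 4$ to show that $f$ and $g$ restrict to permutations of that set. The only divergence is the endgame: the paper sums $f(v)-g(v)$ over all $n$ nodes to get $0=n(b_g-b_f)$ in $\vF_p$, forcing $p\mid n$ against $1\le n\le \frac{p-1}{2}$, whereas you compare $f(T)=T=g(T)$ as sets, making $Q=\{a^2 : a\in T\}$ invariant under the nonzero translation $b_f-b_g$ and hence equal to all of $\vF_p$ --- two equivalent ways of exploiting the primality of $p$ and the smallness of the square/non-square classes.
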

\begin{proof}
 Let $G_{\cS}$ be the graph attached to $\cS$ as in Definition
 \ref{graph_definition}. Let $G'_{\cS}$ be the induced subgraph consisting of
 all nodes of $G_{\cS}$ that are reachable by some path of
 positive length starting from $-b_f$ or $-b_g$.
 That is, the edges of $G'_{\cS}$ are just the edges of $G_{\cS}$ starting and
 ending at a node in $G'_{\cS}$.
 From now on, when we
 speak of nodes and edges, we will always be referring to nodes and edges in
 $G'_{\cS}$.
 We call an edge from $u$
 to $v$ an \emph{$f$-edge} if it comes from the relation $f(u)=v$, while we
 call it a \emph{$g$-edge} if it comes from $g(u)=v$.
 Since $b_f$ and $b_g$ are assumed nonsquare, we have by Corollary
 \ref{main_corollary} that $C$ contains a reducible polynomial if and only if at
 least one of the nodes of $G'_{\cS}$ is a square.
 In the following, we assume for contradiction that $G'_{\cS}$ consists only of
 non-squares.

 Let us observe the following: suppose that there exists a node $v$ of
 $G'_{\cS}$ which is the target of two $f$-edges. By definition, this means that
 there exist two distinct nodes $u,u'\in G'_{\cS}$ such that
 $u^2-b_f=u'^2-b_f=v$. This implies that $u'=-u$, and thus one between $u$ and
 $u'$ is a square, since $-1$ is not a square in $\vF_p$. This contradicts our
 assumption. 
 By symmetry, the same applies to $g$-edges.
 
 By the argument above, we see that every node is the target of at most one
 $f$-edge and one $g$-edge, and by counting edges that it is indeed
 exactly one of each.
 
 Now, consider the sum
 \[
	 \sum_{v \in G'_{\cS}} (f(v) - g(v)).
 \]
 On one hand, each node $u \in G'_{\cS}$ appears exactly once as $f(v)$ and once as $g(v')$ for some $v,v'\in G'_{\cS}$, so the sum is zero.
 On the other hand, it clearly holds that $f(v) - g(v) = b_g - b_f$ for all
 $v$. Letting $n$ be the number of nodes in $G'_{\cS}$, we get the equation
 \[
	 0=n(b_g-b_f) \mbox{ in }\vF_p.
 \]
 Since $b_f\neq b_g$ by hypothesis, we must have $p \mid n$. This is impossible
 however, since $G'_{\cS}$ is not empty and consists only of nonsquares, so
 $1 \le n \le \frac{p-1}{2}$.
\end{proof}
The fact that the polynomials of Proposition \ref{prop:p3} don't have a linear
term is of crucial importance. Let us see why by giving an explicit example of a
semigroup of irreducible polynomials in $\vF_p[x]$ for which Proposition
\ref{prop:p3} does not apply (but $p\equiv 3 \mod 4$).
\begin{example}\label{ex:sharp_cond}
Let us fix $p=7$ and 
\[f=(x-1)^2-5=x^2+5x+3\in \vF_7[x]\]
\[g=(x-4)^2-5=x^2+6x+4\in \vF_7[x].\]
The set $\cS=\{f,g\}$ has distinguished set $D_\cS=\{-5\}$ and graph as in Figure \ref{fig:irred_semigroup_f7}.
\begin{figure}[h]
\centering
\begin{tikzpicture}
	\node (a) at (2,0) [draw, circle, double] {$-5$};
	\node (b) at (-2,0) [draw, circle]         {$3$};
    \node (c) at (0,-2) [draw, circle]         {$-1$};
	\draw [->] (a) to [out=500,in=30]  node[above] {$f$} (b);
	\draw [->] (a) to [out=225,in=30] node[above] {$g$} (c);
	\draw [->] (b) to [out=225,in=200] node[above] {$f$} (c);
	\draw [->] (b) to [loop left]     node[above] {$g$} (b);
	\draw [->] (c) to [loop right]     node[right] {$f$} (b);
	\draw [->] (c) to [loop above]     node[right] {$g$} (b);
\end{tikzpicture}
\caption{The nodes of $G_{\cS}$ reachable from $-5$.}
\label{fig:irred_semigroup_f7}
\end{figure}
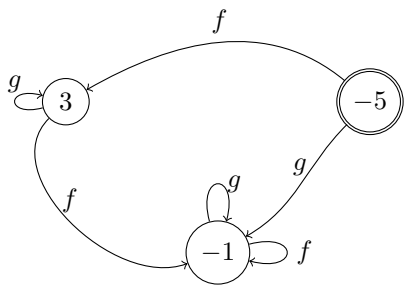
Since $5$ is not a square, and we only look at paths of positive length, the final claim follows by checking that $3$ and $-1$ are not squares modulo $7$.
\end{example}
\bibliographystyle{unsrt}
\bibliography{biblio}{}

\end{document}